\theoremstyle{plain}
\newtheorem{theorem}{Theorem}
\newtheorem{definition}[theorem]{Definition}
\newtheorem{lemma}[theorem]{Lemma}
\newtheorem{remark}[theorem]{Remark}
\newcommand\es{\varnothing}
\newcommand\Aut{\mathrm{Aut}}
\newcommand\sH{{\mathcal H}}
\newcommand\TT{{\mathbb T}}
\newcommand\RR{{\mathbb R}}
\newcommand\ZZ{{\mathbb Z}}
\newcommand\LL{{\mathbb L}}
\newcommand\PP{{\mathbb P}}
\renewcommand\a{\alpha}
\newcommand\om{\omega}
\newcommand\g{\gamma}
\newcommand\eps{\epsilon}
\newcommand\De{\Delta}
\newcommand\qq{\qquad}
\newcommand\q{\quad}
\newcommand\resp{respectively}
\newcommand\oo{\infty}
\newcommand\sG{{\mathcal G}}
\newcommand\Ga{\Gamma}
\newcommand\de{\delta}
\newcommand\id{{\bf 0}}
\newcommand\pd{\partial}
\newcommand\ghf{graph height function}
\newcommand\sghf{square \ghf}
\newcommand\hdi{$\sH$-difference-invariant}
\newcounter{mycount}\newcounter{mycount2}\newcounter{mycount3}
\newenvironment{romlist}{\begin{list}{\rm(\roman{mycount2})}%
   {\usecounter{mycount2}\labelwidth=1cm\itemsep 0pt}}{\end{list}}
\newenvironment{letlist}{\begin{list}{\rm(\alph{mycount})}%
   {\usecounter{mycount}\labelwidth=1cm\itemsep 0pt}}{\end{list}}
\newcounter{newcount1}
\newcommand\ga{\gamma}
\numberwithin{equation}{section}
\numberwithin{theorem}{section}
\numberwithin{figure}{section}
\title{On counting polygons in a crystal}
\author{Geoffrey R.\ Grimmett}
\address{Centre for
Mathematical Sciences, Cambridge University, Wilberforce Road,
Cambridge CB3 0WB, UK} 
\email{g.r.grimmett@statslab.cam.ac.uk}
\urladdr{\url{http://www.statslab.cam.ac.uk/~grg/}}
\date{9 December 2025} %\today}
\keywords{self-avoiding polygon, self-avoiding walk, connective constant}
\subjclass[2010]{05C30, 05C38, 60K35, 82B20}
\begin{document}

\begin{abstract}
How many $n$-step polygons exist that contain a given vertex of an infinite quasi-transitive graph $G$?  
The exponential growth rate of such polygons is identified as the connective constant when $G$
has sub-exponential growth and possesses a so-called square graph height function. The last condition
amounts to the requirement that $G$ has a certain $\ZZ^2$ action of automorphisms.
The main theorem extends 
a result of Hammersley (Proc.\ Cambridge Philos.\ Soc.\ 57 (1961) 516--523) and others for the hypercubic lattice, and responds
to Hammersley's challenge to prove such a result for more general \lq\lq crystals''.
\end{abstract}
\maketitle

\section{Introduction}\label{sec:intro}

A \emph{self-avoiding walk} (SAW)
on a graph $G$ is a path that visits no vertex more than once.
The study of SAWs was initiated in the chemical theory of polymerisation
(see \cite{Orr} and the book \cite{F} of Flory). 
In their visionary paper \cite{HM}, Hammersley and Morton
studied \emph{inter alia} the number of $n$-step SAWs on a lattice $G$. 
They used subadditivity to prove the existence of
the exponential growth rate, that is, the limit
\begin{equation}\label{eq:connc}
\mu(G):= \lim_{n\to\oo} c_n^{1/n},
\end{equation}
where $c_n$ is the number of $n$-step SAWs on $G$ starting at a given vertex. 
This investigation was continued by Hammersley alone in \cite{Ham-cc}.
The constant $\mu(G)$ was termed the \emph{connective constant} of $G$ in \cite{BHam}, where
\eqref{eq:connc} is stated without proof.

In 1961, Hammersley \cite{Ham61} extended the theory from counts of SAWs to counts 
of (self-avoiding) polygons. He showed %\footnote{Subject to a lacuna, see the footnote after Theorem \ref{thm:ball}.}
that, in the case of the hypercubic lattice $\ZZ^d$, the exponential growth rates of the numbers of 
$n$-step SAWs and polygons are equal. The purpose of the current note is to provide a response 
to Hammersley's challenge
to determine \lq\lq what properties a crystal must possess'' in order that this be the case.

In a further work \cite{HW62}, Hammersley and Welsh introduced a technique that
has proved very useful in studying SAWs and polygons, namely the combinatorics of so-called \lq bridges'.
In broad terms made specific in Section \ref{sec:not}, a bridge is a SAW with extremal endpoints.
The key property of bridges is that one may extend a bridge $b$ by appending a second bridge $b'$ to 
the final vertex of $b$. It is proved in \cite{HW62} that, for the hypercubic lattice $\ZZ^d$, the exponential
growth rate of the number of $n$-step bridges equals the connective constant $\mu(\ZZ^d)$.

In the current paper, we study polygons in quasi-transitive graphs $G$ satisfying certain conditions, thereby extending results of
\cite{Ham61} beyond the hypercubic lattices $\ZZ^d$. The main result of this paper is Theorem \ref{thm:main}, which we summarise as follows.

\begin{theorem}[Equality of growth rates]\label{thm:summ}
Let $G\in\sG$ have a \sghf\ and sub-exponential growth, and let 
$p_n$ be the number of $n$-step polygons that include the root of $G$. Then
$\pi(G) :=\limsup_{n\to \oo}p_n^{1/n}$ satisfies
$\pi(G) = \mu(G)$.
\end{theorem}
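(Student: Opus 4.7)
The plan is to prove the two inequalities $\pi(G)\le\mu(G)$ and $\pi(G)\ge\mu(G)$ separately. The upper bound is routine: an $n$-step polygon through the root $\o$ may be cut open by deleting any one of its $n$ edges, producing a SAW of length $n-1$ whose endpoints are adjacent in $G$. Since $G$ has bounded vertex degree, this gives $np_n\le C c_{n-1}$ for some constant $C$, whence $\pi(G)\le\mu(G)$.

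For the reverse inequality I would follow the Hammersley--Welsh bridge strategy, adapted to the quasi-transitive setting via the \sghf. By hypothesis, this height function endows $G$ with a $\ZZ^2$-action $\langle\si_1,\si_2\rangle$ by automorphisms in which $\si_1$ shifts the integer-valued height coordinate $h$ and $\si_2$ acts on an independent direction while preserving $h$. Relative to $h$ one defines a \emph{bridge} to be a SAW $v_0,v_1,\dots,v_n$ with $h(v_0)<h(v_i)\le h(v_n)$ for $0<i\le n$. Bridges concatenate in the familiar way (translate one so that its initial vertex coincides with the terminal vertex of the other), so the bridge counts $b_n$ are supermultiplicative; under the stated assumptions of graph height function and sub-exponential growth, the existing theory of bridges on quasi-transitive graphs yields $\lim_n b_n^{1/n}=\mu(G)$.

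The crucial new step is to convert bridges into polygons, where the \emph{square} nature of the height function---the presence of the second generator $\si_2$---is essential. Given a bridge $b$ from $v_0$ to $v_m$, its translate $\si_2 b$ is an isomorphic bridge from $\si_2 v_0$ to $\si_2 v_m$ with exactly the same $h$-profile as $b$. I would form a polygon by concatenating $b$, a short path from $v_m$ to $\si_2 v_m$, the reversal of $\si_2 b$, and a short path from $\si_2 v_0$ back to $v_0$; such paths exist of length bounded in terms of $G$ alone by quasi-transitivity. A suitable restriction of the bridge class (for instance, to bridges whose $\si_2$-coordinate lies in a single slab of width one) guarantees that $b$ and $\si_2 b$ are vertex-disjoint and that neither meets the two closing paths. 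This produces a polygon of length $n+O(1)$ for each such bridge, giving $p_{n+O(1)}\ge c'\, b_n$ and hence $\pi(G)\ge\mu(G)$.

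The main obstacle is precisely this last construction: ensuring that the four pieces assemble into a genuinely simple cycle, and that the restricted subclass of bridges to which the construction applies still enjoys the full growth rate $\mu(G)$. The natural remedy---confining bridges to a \lq\lq half-space\rq\rq\ with respect to the $\si_2$-direction, in analogy with the classical Hammersley--Welsh reduction on $\ZZ^d$---must here be justified without recourse to the reflection and lattice symmetries of $\ZZ^d$, and this is exactly where sub-exponential growth of $G$ is invoked, both to bound the entropic cost of the slab restriction and to rule out that the bridge growth rate of the restricted class falls below $\mu(G)$.
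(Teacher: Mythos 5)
Your upper bound is fine (it is essentially \eqref{eq:ineq1}), and invoking the bridge theory to get $\lim_n b_n^{1/n}=\mu(G)$ matches Theorem \ref{thm:bridge}. The gap is in the central step, the conversion of bridges into polygons. First, the object you restrict to --- ``bridges whose $\si_2$-coordinate lies in a single slab of width one'' --- does not exist in this generality: a square ghf supplies one integer height $h$ together with a height-preserving translation $\rho$ commuting with $\sH$, but $\rho$ does not induce a second $\sH$-difference-invariant integer coordinate on $V$, so the slab is undefined. Second, and more damagingly, even where such a coordinate exists (e.g.\ $\ZZ^2$) the restriction to a slab of bounded width costs an \emph{exponential} factor, not a sub-exponential one: bridges confined to a bounded-width strip grow at the rate of a ladder graph, which is strictly below $\mu$ (compare Section \ref{ex:ladder}, where $\beta(\LL_2)=\phi<\beta(\LL_3)$). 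Sub-exponential growth of $G$ cannot absorb this loss; in the actual proof it is used only (i) to force unimodularity so that $\beta(h)=\mu$, and (ii) in a pigeonhole step, where one divides the bridge count by $|\Ga_n|$ and uses $|\Ga_n|^{1/n}\to1$. So your restricted bridge class need not have growth rate $\mu$, and the inequality $p_{n+O(1)}\ge c'b_n$ does not follow.

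There is also a disjointness problem with taking the translate $\si_2 b=\rho b$ itself: $\rho$ moves every vertex a bounded distance, so $b$ and $\rho b$ (and the two short closing arcs) will in general intersect. The paper's remedy is structurally different: pigeonhole over endpoints to find a vertex $P_n'$ receiving at least $b_n/|\Ga_n|$ bridges; stack $N$ $\sH$-translates of a tube $R_n$ around a shortest bridge to form $S_N$, carrying $(b_n/|\Ga_n|)^N$ long bridges; separate $S_N$ from its image under a \emph{large} power $\rho^{k_n}$ (Lemma \ref{prop:es}), not $\rho$ itself; and close up with two arcs lying strictly below height $0$ and strictly above height $h(P_n^N)$, whose lengths depend on $n$ and $k_n$ but not on $N$. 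One then takes $N\to\oo$ before $n\to\oo$, which is exactly how the $n$-dependent closing cost becomes negligible, and is also why the conclusion is naturally a limsup statement along an arithmetic sequence. Your sketch, which closes up a single bridge with $O(1)$ arcs, has no mechanism playing the role of this double limit, and without it the argument does not yield $\pi(G)\ge\mu(G)$.
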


The set $\sG$ and the term \lq \sghf' are explained in Section \ref{sec:not}. Some illustrations of the terms
and conclusion of this theorem are provided in Sections \eqref{ex-amen}--\eqref{ex:tz},
where examples are provided where either of the conditions of the theorem fail and for which indeed $\pi<\mu$.
A discussion of the missing liminf is found after Theorem \ref{thm:main}.

The challenge in the current work lies in doing without some of the symmetries
of $\ZZ^d$ that underly earlier work on counts of polygons. 
This may be seen as a continuation of the work of Grimmett and Li that was
directed at understanding the properties of connective constants of general quasi-transitive graphs
(see \cite{GL19} for a review). 

Theorem \ref{thm:summ} provides a sufficient condition on the graph $G$  for the equality $\pi(G)=\mu(G)$
to hold.
It is believed that the strict inequality $\pi(G) < \mu(G)$ holds for non-amenable graphs, and some comments on
this inequality and its connection to the property of ballisticity are included in Sections \ref{ex-amen}--\ref{ssec:35}.

The reader is referred to \cite{BDGS,G-pog,MS} and \cite[Chap.\ 7]{Rens}
for general accounts of the combinatorics of SAWs and their cousins, and
to \cite{GL19} for SAWs on quasi-transitive graphs.

This paper is structured as follows.
Section \ref{sec:not} is devoted to background terminology and properties for graphs and their height functions.
The principal Theorem \ref{thm:main} follows in Section \ref{sec:main},  
together with some examples and a discussion of the ballisticity of random SAWs. The proof 
of Theorem \ref{thm:main} is found in Section \ref{sec:pf}.

\section{Preliminaries}\label{sec:not}

Throughout this paper, $G=(V,E)$ will denote an infinite, connected,  
locally finite, quasi-transitive graph with root denoted $\id$.
For simplicity, we  assume $G$ has neither parallel edges nor loops,
and we write $\sG$ for the set of all such rooted graphs.
If $u$ and $v$ are neighbours in $G$ (written $u\sim v$), we write $\langle u,v\rangle$ for the edge joining them.
The set of neighbours of $v$ is denoted $\pd v$. The graph-distance $d(u,v)$ is the 
number of edges in the shortest path from $u$ to $v$.

The automorphism group of $G=(V,E)$ is
denoted $\Aut(G)$. A subgroup $\Ga \le \Aut(G)$ is said to \emph{act
transitively} on $G$  (or on its vertex-set $V$)
if, for $v,w\in V$, there exists $\g \in \Ga$ with $\g (v)=w$.
The subgroup $\Ga$  is said to  \emph{act quasi-transitively} if there is a finite
set $W$ of vertices 
such that, for $v \in V$, there exist
$w \in W$ and $\g \in \Ga$ with $\g (v) =w$.
The graph $G$ is called \emph{transitive} 
(\resp, \emph{quasi-transitive}) if $\Aut(G)$ acts transitively
(\resp, quasi-transitively). 
The orbit of a vertex $v$ under $\Ga$ is denoted $\Ga v$. The number of orbits of $V$ under $\Ga$ is written
as $M(\Ga) =|V/\Ga|$.

A \emph{cycle} of $G$ is a sequence $(v_0,v_1,\dots,v_m)$ with $m\ge 3$
such that $v_i\sim v_{i+1}$ for $0\le i < m$,
$v_m=v_0$, and  $v_0,v_1,\dots,v_{m-1}$ are distinct vertices. The length of a cycle is the number of edges
traversed. There is some indecision in the literature concerning the terms cycle, circuit, polygon, 
and we shall define the last in the next paragraph.

A \emph{self-avoiding walk} (SAW) on $G$ is a path starting at $\id$ that visits no vertex more than once.
A \emph{polygon} is a cycle of $G$ containing the root $\id$; that is, a polygon, when oriented, comprises 
a SAW from $\id$ to some 
neighbour $v$ of $\id$, 
together with the edge $\langle v,\id\rangle$. A SAW (\resp, polygon) is said to have $n$ steps if it has exactly $n$ edges.
Let $c_n$ be the number of $n$-step SAWs from $\id$, and let $p_n$ be the number of $n$-step polygons
containing $\id$.
We shall also be interested in \lq bridges', which will be defined soon.

By the above definition of a polygon, we have that
\begin{equation}\label{eq:ineq1}
2p_n = c_{n-1}(\pd\id) \le c_n, \qq n\ge 3,
\end{equation}
where $c_m(\pd \id)$ denotes the number of $m$-step SAWs from $\id$
ending at some neighbour of $\id$.
We seek here conditions on $G$ for which the exponential growth rates of $c_n$ and $p_n$ are equal, and towards
 this end 
we introduce the concept of a \ghf.

\begin{definition} \cite[Defn 3.1]{GrL4}\label{def:height}
Let $G =(V,E)\in \sG$ with root labelled $\id$. 
A \emph{\ghf} (abbreviated to \lq ghf') on $G$ is a pair $(h,\sH)$ such that
\begin{letlist}
\item $h:V \to\ZZ$, and $h(\id)=0$, 
\item $\sH\le \Aut(G)$ acts quasi-transitively on $G$ 
such that $h$ is \emph{\hdi}, in that
$$
h(\a v) - h(\a u) = h(v) - h(u), \qq \a \in \sH,\ u,v \in V,
$$
\item for  $v\in V$,
there exist $u,w \in \pd v$ such that
$h(u) < h(v) < h(w)$.
\end{letlist}
\end{definition}

Note that, if $(h,\sH)$ is a ghf, then so is $(-h,\sH)$. Examples of graphs in $\sG$ possessing a ghf are found in \cite{GrL4},
and of graphs without a ghf in \cite{GL-amen}. While possession of a ghf allows progress on SAWs 
and their so-called locality problem for connective constants, 
one needs more for the study of polygons.

\begin{definition}\label{def:ht2}
Let $(h,\sH)$ be a \ghf\ for $G\in\sG$, and let $\rho\in\sH$. We call $(h,\sH,\rho)$ a \emph{square
\ghf} (abbreviated to \lq square ghf') if
\begin{letlist}
\item $\rho$ is a translation, in that it fixes no finite set $F$ of vertices,
\item $\rho$ is height-preserving in that, for $v\in V$, we have $h(v)=h(\rho(v))$,
\item $\rho$ commutes with every $\alpha\in\sH$.
\end{letlist}
\end{definition}

The term \lq square ghf' should not be confused with the \lq strong ghf' of \cite{GL-amen}, and it is
motivated as follows. Under the conditions of the definition,
for a non-height-preserving $\alpha\in\sH$, the action on $V$ of the pair $(\rho,\alpha)$ 
is as the square lattice $\ZZ^2$. One might relax somewhat these conditions, but 
for simplicity we retain the above.

These definitions are utilized as follows. Firstly, it was proved in \cite{GrL4} that, if $G$ has a ghf,
then one may define the notion of a \lq bridge' on $G$, and moreover the bridge  
growth rate $\beta$ equals the connective constant $\mu$.
This is elaborated later in this section. Secondly, our main theorem, Theorem \ref{thm:main},
asserts that a graph with a square ghf has the property that $\limsup p_n^{1/n}=\beta$ (which in turn equals $\mu$).

We discuss bridges next. 
Bridges were introduced (but not by that name) in \cite{HW62}
in the context of the hypercubic lattice $\ZZ^d$. Let $(h,\sH)$ be a ghf for $G\in\sG$.
An $h$-\emph{bridge} $(v_0,v_1,\dots,v_n)$
is a SAW on $G$ satisfying
$$
h(v_0)< h(v_m) \le h(v_n), \qq 0< m \le n.
$$
Let $b_n$ be the number of $n$-step $h$-bridges $\pi$ starting at $v_0=\id$.
Using quasi-transitivity and subadditivity, it may be shown
that the limit 
\begin{equation}\label{eq:beta}
\beta(h):=\lim_{n\to\oo} b_n^{1/n}
\end{equation}
exists, and $\beta(h)$ is called the $h$-\emph{bridge constant}. 

Note that the definition \eqref{eq:beta} of $\beta(h)$ depends on the choice of \ghf\ $h$.
It was noted above that, if $(h,\sH)$ is a ghf, then so is $(-h,\sH)$. It turns out that $\beta(h)=\beta(-h)$ if
$\sH$ is what is called \lq unimodular', and in this case the \emph{bridge constant} $\beta$ 
is defined to be their
common value. In the non-unimodular case, we define $\beta$ by $\beta=\max\{\beta(h), \beta(-h)\}$.
In each case we have that $\beta=\mu$, whence $\beta=\beta(G)$ is independent of the choice of ghf.
The above is summarised in the following theorem.

\begin{theorem}\label{thm:bridge}
Let $G\in \sG$ possess a ghf $(h,\sH)$, and denote by $\mu(G)$ the connective constant of $G$. 
\begin{letlist}
\item\cite{GrL4} If $\sH$ is unimodular, then $\beta(h)=\beta(-h)$, and their common value 
$\beta(G)$ satisfies $\beta(G)=\mu(G)$.
\item\cite{Lind} If $\sH$ is non-unimodular, then $\beta(G):=\max\{\beta(h),\beta(-h)\}$
satisfies  $\beta(G)=\mu(G)$.
\end{letlist}
\end{theorem}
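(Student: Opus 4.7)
The plan is to adapt the Hammersley--Welsh bridge method from $\ZZ^d$ to a general $G\in\sG$ with ghf $(h,\sH)$, using quasi-transitivity of $\sH$ in place of full translation invariance.

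The first step is approximate supermultiplicativity of $b_n$. Given $h$-bridges $\pi_1,\pi_2$ from $\id$ of lengths $m,n$, let $u$ be the endpoint of $\pi_1$. Choose $\alpha\in\sH$ whose orbit contains $u$ (up to a bounded automorphism correction, of which there are only $M(\sH)$ many choices, one per $\sH$-orbit). Then $\pi_1 \cdot \alpha(\pi_2)$ is a SAW, and by $\sH$-difference-invariance of $h$ the height profile of $\alpha(\pi_2)$ is a shift of that of $\pi_2$, so the concatenation remains an $h$-bridge. This yields $b_{m+n+O(1)} \ge c_0\, b_m b_n$, from which Fekete's lemma delivers the limit $\beta(h)=\lim b_n^{1/n}$. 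The same argument applies to $-h$, and since every bridge is a SAW, $\beta(\pm h)\le \mu(G)$ trivially.

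The reverse inequality $\mu(G)\le\max\{\beta(h),\beta(-h)\}$ uses a Hammersley--Welsh decomposition. Let $a_n^{\pm h}$ count $n$-step $(\pm h)$-half-space walks from $\id$, that is, SAWs $(v_0,\dots,v_n)$ with $\pm h(v_i) \ge \pm h(v_0)$ for all $i\ge 0$. Peeling off successive height-record subwalks decomposes a half-space walk into $\pm h$-bridges indexed by a partition of $n$ into distinct parts, and one obtains $a_n^h \le P_D(n)\, b_n$ where $P_D(n) = e^{O(\sqrt n)}$ denotes the number of such partitions. Splitting a general SAW at a vertex attaining the minimum of $h$ along the walk gives
\begin{equation*}
c_n \le \sum_{k+\ell=n} a_k^h\, a_\ell^{-h} \le e^{o(n)}\,\max\{\beta(h),\beta(-h)\}^n,
\end{equation*}
and taking $n$th roots and combining with the trivial inequality delivers part (b).

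For the unimodular case (a) one needs in addition $\beta(h)=\beta(-h)$. I would obtain this via the mass transport principle on $\sH$: reversing an $h$-bridge $\pi$ from $\id$ to $u$ and translating it back to $\id$ by a suitable element of $\sH$ produces a $(-h)$-bridge, and the orbit-wise counts match because unimodularity makes the modular function trivial, so mass transport preserves the relevant averaged quantities on $V/\sH$. The principal obstacle is the bookkeeping in the first step: the reflection trick used on $\ZZ^d$ to promote an arbitrary walk directly to a bridge is not available here, and one must rely on the combination of $\sH$-difference-invariance with the orbit structure of $\sH$ to absorb the additive defects in concatenation uniformly over the $M(\sH)$ orbits, while keeping the Fekete-type limit argument intact.
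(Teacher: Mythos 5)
You should note at the outset that the paper itself offers no proof of Theorem \ref{thm:bridge}: part (a) is quoted from \cite{GrL4} and part (b) from \cite{Lind}, and your overall architecture --- approximate supermultiplicativity of bridge counts to define $\beta(\pm h)$, a Hammersley--Welsh decomposition to get $\mu\le\max\{\beta(h),\beta(-h)\}$, and unimodularity to force $\beta(h)=\beta(-h)$ --- is indeed the strategy of those references. The genuine problem is your intermediate inequality $a_n^h\le P_D(n)\,b_n$. On $\ZZ^d$ this is proved by \emph{unfolding} the record decomposition using reflections, and you yourself observe that no reflection is available here; without it the inequality is false in the generality of the theorem. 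Take $G=\TT_3$ with the horocyclic ghf $(h,\sH)$ of Section \ref{ex:tree} and apply your claim to the ghf $(-h,\sH)$ (which you need anyway, since $a_\ell^{-h}$ appears in your bound for $c_n$): every $(-h)$-bridge is a straight descending path, so there is exactly one of each length, whereas a $(-h)$-half-space walk may descend $a$ steps and then ascend $b\le a$ steps into a fresh subtree, so $a_n^{-h}$ grows like $2^{n/2}$, which is not $e^{O(\sqrt n)}$. So the step as stated cannot be repaired by bookkeeping; a proof must not pass through it.

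What record-peeling actually gives --- and what \cite{Lind} (and, in the unimodular case, \cite{GrL4}) exploit --- is a decomposition of an $h$-half-space walk into an \emph{alternating} sequence of $h$-bridges and $(-h)$-bridges with strictly decreasing spans; the spans sum to at most $dn$ with $d$ as in \eqref{eq:ddef}, so the number of span sequences is $e^{O(\sqrt n)}$, and using $b_m^{\pm h}\le C\,\beta(\pm h)^m$ (supermultiplicative sequences have limit equal to supremum, modulo the bounded concatenation defects) one obtains $a_n^{\pm h}\le e^{o(n)}\max\{\beta(h),\beta(-h)\}^n$. With that correction your final display, whose right-hand side already carries the maximum, goes through; note also that splitting a SAW at its first height minimum produces \emph{two} $h$-half-space walks issuing from that vertex, not one of each sign (a harmless slip, handled by quasi-transitivity since the split vertex is not the root). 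Two further points need more than hand-waving: concatenation of bridges ending and starting in different orbits requires inserting a \emph{stiff} connecting SAW of length at most $r$ as in \eqref{eq:r}, built from Definition \ref{def:height}(c), not merely a choice of automorphism; and in part (a), reversal sends an $h$-bridge to a $(-h)$-walk with the weak and strict inequalities interchanged and with start and end swapped, so converting counts of bridges \emph{ending} at a given vertex into counts \emph{starting} there is exactly where unimodularity must do its work --- your mass-transport sentence points in the right direction, but it is where essentially all of the content of part (a) lies, and the non-unimodular tree example shows it cannot be finessed.
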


For discussions of unimodularity see \cite[Sect.\ 8.2]{LyP} or \cite[Sect.\ 3]{GrL4}.

The \emph{growth function} of a transitive graph $G$ is defined as
\begin{equation*}
\Ga_n=|\{v\in V: d(0,v)\le n\}|,
\end{equation*}
that is, the number of vertices in a ball of radius $n$. The graph is said to have \emph{sub-exponential growth} if 
\begin{equation}\label{eq:sube}
\lim_{n\to\oo} \frac1n \log \Ga_n = 0. 
\end{equation}

\begin{remark}\label{rem:1}
It is useful to recall that any quasi-transitive graph $G$ with sub-exponential growth is amenable,  
and hence any automorphism group of such $G$ that acts quasi-transitively 
is unimodular (see \cite{SW} and \cite[Exer.\ 8.30]{LyP}).
\end{remark}

\section{Main theorem}\label{sec:main}

\subsection{Statement of main theorem}\label{ssec31}
For a rooted graph $G$, let
\begin{equation}\label{eq:defpi}
\pi=\pi(G) := \limsup_{n\to \oo}p_n^{1/n},
\end{equation}
where $p_n$ is the number of $n$-step polygons of $G$ containing the root $\id$. The three main characters of
this article are $\mu(G)$, $\beta(G)$, $\pi(G)$.

\begin{figure}[t]
\centerline{\raisebox{10pt}{\includegraphics[width=0.5\textwidth]{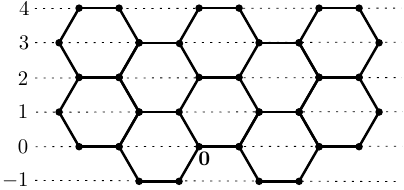}}
\q\includegraphics[width=0.36\textwidth]{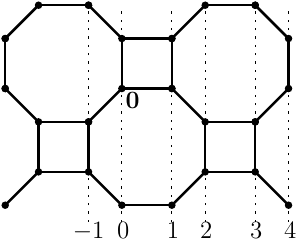}}
\caption{The hexagonal lattice and the square/octagon lattice. The heights of vertices are as marked. 
The automorphism $\rho$ is a suitable shift rightwards for the first, and a suitable shift upwards for the second.}\label{fig:latt}
\end{figure} 

\begin{remark}\label{rem:ind}
Recall that, for quasi-transitive graphs $G\in\sG$,
 $\mu(G)$ is independent of the choice of root (see \cite{Ham-cc}).
Furthermore, for such $G$ that in addition possess a ghf, $\beta(G)$ 
exists and is independent of the choice of root (see \cite{GrL4, Lind}
and Theorem \ref{thm:bridge}).
Subject to the conditions of the following theorem, $\pi(G)$ is also independent of the choice of root.
\end{remark}

\begin{theorem}\label{thm:main}
Let $G\in\sG$ have a square ghf $(h,\sH,\rho)$ and sub-exponential growth. 
\begin{letlist}
\item We have that 
$\pi(G) = \beta(G)=\mu(G)$.
\item Furthermore, for $\eps>0$, there exists an arithmetic sequence $(m_N: N\ge 1)$ of integers along which
$\liminf_{N\to\oo} p_{m_N}^{1/m_N} \ge \beta(G)-\eps$.
\end{letlist}
\end{theorem}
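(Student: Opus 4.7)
\emph{Upper bound.} The inequality $\pi(G) \leq \mu(G)$ is immediate from \eqref{eq:ineq1}, which gives $p_n \leq c_n/2$ and thus $\limsup p_n^{1/n} \leq \mu$. Since sub-exponential growth implies unimodularity (Remark~\ref{rem:1}), Theorem~\ref{thm:bridge} yields $\mu(G) = \beta(G)$, hence $\pi(G) \leq \beta(G)$.

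\emph{Lower bound via paired bridges with common endpoint.} For $\pi(G) \geq \beta(G)$, the plan is to build polygons by concatenating two $h$-bridges that share an endpoint. If $b_1, b_2$ are ascending $h$-bridges of length $n$ from $\id$ to a common vertex $v$ with disjoint interiors, then $b_1 \cdot b_2^{-1}$ is a polygon of length $2n$ through $\id$. Writing $b_n(v)$ for the number of $h$-bridges of length $n$ from $\id$ to $v$, Cauchy--Schwarz over endpoints together with the sub-exponential ball-volume bound $|\{v : d(\id,v) \le n\}| = e^{o(n)}$ yields
\[
\sum_v b_n(v)^2 \;\geq\; \frac{b_n^2}{|\{v : b_n(v) > 0\}|} \;\geq\; \frac{b_n^2}{e^{o(n)}} \;\geq\; \frac{(\beta-\eps)^{2n}}{e^{o(n)}}
\]
for $n$ sufficiently large (the last inequality uses $b_n \ge (\beta-\eps)^n$ from Theorem~\ref{thm:bridge}). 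Each polygon of length $2n$ admits at most polynomially many such decompositions, indexed by the choice of an extreme-$h$ vertex. So, granted that at least an $e^{-o(n)}$ fraction of the candidate pairs have disjoint interiors, one obtains $p_{2n} \geq (\beta - \eps)^{2n}/e^{o(n)}$, and thus $\pi(G) \geq \beta - \eps$ for arbitrary $\eps > 0$.

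\emph{Role of $\rho$, main obstacle, and part (b).} The central technical task---and my expected main obstacle---is to show that a non-sub-exponentially-small fraction of the same-endpoint bridge pairs are internally disjoint. This is where the square-ghf translation $\rho$ enters: given a bad pair with an interior intersection, one may replace $b_2$ by $\rho^k(b_2)$ for some small shift $k$, which remains an $h$-bridge with the identical height profile (since $\rho$ is height-preserving) and couples coherently with the $\sH$-orbit of the endpoint (since $\rho$ commutes with $\sH$); short connecting paths drawn along a $\rho$-orbit close the walk up into a polygon of length $2n + O(|k|)$. The crux is to bound the forbidden shifts $k$ (those sustaining intersections) in terms of the bridge-intersection structure and to keep $|k| = o(n)$, so the added path lengths are absorbed in the $e^{o(n)}$ factor; sub-exponential growth supplies the combinatorial flexibility for the pigeonhole. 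For part (b), fixing a single shift value $k = k_0$ throughout the construction produces polygons all of length $m_N = 2N + f(k_0)$ for a fixed offset $f(k_0)$, in an arithmetic progression in $N$, along which the same Cauchy--Schwarz count delivers $\liminf_{N\to\infty} p_{m_N}^{1/m_N} \geq \beta(G) - \eps$.
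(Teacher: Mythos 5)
Your upper bound ($\pi\le\mu=\beta$ via \eqref{eq:ineq1}, Remark \ref{rem:1} and Theorem \ref{thm:bridge}) matches the paper, and your Cauchy--Schwarz/pigeonhole step over endpoints is legitimate and analogous in spirit to the paper's choice of a popular endpoint $P_n$ with at least $b_n/|\Ga_n|$ bridges. But the heart of the lower bound is exactly the step you flag as your ``main obstacle'' and then only sketch: producing, at sub-exponential cost, \emph{disjoint} bridge configurations that can be closed into polygons. As written, your plan does not go through. If you try to repair an intersecting pair $(b_1,b_2)$ by replacing $b_2$ with $\rho^k(b_2)$, the natural pigeonhole only says that each pair of vertices $(u,w)\in b_1\times b_2$ forbids at most one value of $k$ (since $\rho$ is a translation), so the set of bad shifts has size $O(n^2)$; nothing forces a good shift with $|k|=o(n)$, and your length accounting (polygons of length $2n+O(|k|)$, with the correction absorbed into $e^{o(n)}$) collapses if $|k|$ is allowed to be of order $n$ or $n^2$. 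Indeed, in the paper's construction the analogous shift $k_n$ is \emph{not} small: the tube around a shortest bridge has width $n$, so separating it from its $\rho^{k}$-image (Lemma \ref{prop:es}) requires $k$ of order at least $n$, and the connecting paths $\nu^\pm$ have length of that order too. There is also a counting issue you leave implicit: different bad pairs need different shifts and different connectors, and you must control how many times a single polygon is produced; ``at most polynomially many decompositions, indexed by an extreme-$h$ vertex'' does not address shifted-and-connected configurations.

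The paper resolves this with a two-scale construction that your single-scale plan is missing. Fix $n$, pick the popular endpoint $P_n$ and move it into $\sH\id$, build the region $R_n$ around a shortest bridge, and concatenate $N$ translates $\ga^i(R_n)$ into a tube $S_N$ carrying at least $(b_n/|\Ga_n|)^N$ bridges of length $(n+\ell)N$ (display \eqref{eq:10}). Then $\rho$ is used \emph{once}, not pair by pair: Lemma \ref{prop:es} (via Lemma \ref{lem:new1}) gives a single $k=k_n$ such that the whole doubly-infinite tube $S_\oo$ is disjoint from $\rho^{k}(S_\oo)$, so \emph{every} bridge in $S_N$ is automatically disjoint from \emph{every} bridge in $\rho^k(S_N)$; two such bridges are joined into a polygon by connectors $\nu^\pm$ of length at most $2t_n+k\de$, which depends on $n$ but not on $N$. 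Letting $N\to\oo$ first amortizes all separation and connection costs (this is \eqref{eq:5}--\eqref{eq:8}), and only then does $n\to\oo$; sub-exponential growth enters solely through the factor $|\Ga_n|^{1/n}\to1$. This ordering of limits is also what makes part (b) come out correctly: the arithmetic progression is $m_N=2(n+\ell)N+l_n^-+l_n^+$ in $N$ for a \emph{fixed} large $n$, whereas your proposed $m_N=2N+f(k_0)$ conflates the bridge length with the progression parameter. So the proposal identifies the right ingredients (bridges, the translation $\rho$, sub-exponential growth) but lacks the tube construction and the $N$-then-$n$ limit that make the disjointness and the length bookkeeping work; as it stands the key step is a genuine gap.
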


The limsup of $p_n^{1/n}$
is identified in Theorem  \ref{thm:main}, but the story of its liminf is incomplete. 
The missing element seems to be a proof of the convergence of $p_n^{1/n}$.
The proof of this for $\ZZ^d$ uses a fairly simple geometric construction together with subadditivity;
this is due to Hammersley \cite{Ham61}, but see also \cite[Thm 3.2.3]{MS}.
The geometry
is however more challenging in the generality of the current paper. 
There exist nevertheless graphs satisfying the conditions of Theorem 
\ref{thm:main} to which the subadditivity argument may be adapted, 
so long as a certain extra condition on the ghf holds. We do not investigate this here.

It is a tautology that a bipartite graph $G$ has no odd polygons. Thus, for bipartite graphs
(such as the hypercubic, hexagonal, and square/octagon lattices),
the liminf is to be taken along the even integers only.

By Theorem \ref{thm:main}, the exponential growth rates of polygon and SAW counts are equal (under the stated conditions). This is much weaker than proving  concrete polynomial bounds on $p_n/c_n$, as may be found in part for $\ZZ^d$ in
\cite[Thm 1.1]{DCGHM}.   

The importance of the two conditions of the theorem are illustrated by some examples.

\subsection{Euclidean lattices}\label{ex-amen}\cite[Sect.\ 3]{GrL4}
Many lattices embedded in $\RR^d$ satisfy the conditions of Theorem \ref{thm:main}. 
We mention two planar examples, namely the
hexagonal lattice and the square/octagon lattice as  illustrated in Figure \ref{fig:latt}.

\begin{figure}[t]
\centering
\includegraphics[width=0.5\textwidth]{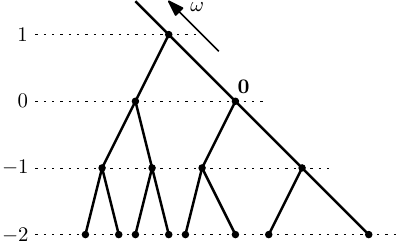}
\caption{The $3$-regular tree with the \lq horocyclic' height
function.}\label{fig:tree}
\end{figure}

\subsection{Tree graphs}\label{ex:tree}\cite[Sect.\ 3]{GrL4}
The $3$-regular tree $\TT_3$ has exponential growth, and possesses a ghf but no square ghf,
as follows. Let $\om$ be a ray of $\TT_3$, 
and `suspend' $\TT_3$ from $\om$ (as illustrated  in Figure \ref{fig:tree}). A given vertex
on $\omega$  is labelled $\id$ and has
height $h(\id)=0$, and other vertices have their horocyclic heights, that is, their generation numbers relative to $\id$.

Let $\sH$ be the set of automorphisms of $\TT_3$ that fix 
the end of $\TT_3$ determined by $\om$, noting that $\sH$ is non-unimodular (see \cite[Rem.\ 3.3]{GrL4}). 
Then $(h,\sH)$ is a ghf. It follows (and is in fact trivial) that $\beta(\TT_3)=\mu(\TT_3)=2$.
On the other hand, $\TT_3$ has no square ghf and, since it has no cycles we have $\pi(\TT_3)=0$. 

\begin{figure}[t]
 \centerline{\includegraphics[width=0.4\textwidth]{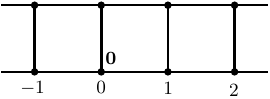}}
  \caption{The ladder graph $\LL_2$ with the root and heights indicated.}
  \label{fig:ladder}
\end{figure}

\subsection{Ladder graphs}\label{ex:ladder}\cite[Sect.\ 3]{GL-bnds}
The ladder graph $\LL_2:=\ZZ\times\{0,1\}$  of Figure \ref{fig:ladder} has sub-exponential growth,
and possesses a ghf (as indicated in the figure) but no square ghf. 
Furthermore, $\beta(\LL_2)=\mu(\LL_2)=\phi$ and $\pi(\LL_2)=1$,
where $\phi=\frac12(1+\sqrt 5) = 1.618\dots$ is the golden mean. 

Now consider the \lq triple ladder' $\LL_3:=\ZZ\times \{0,1,2\}$. By
Theorem \ref{thm:bridge}, $\beta(\LL_3)=\mu(\LL_3)$.
For simplicity, 
suppose a polygon $p$ goes rightwards from $\id$. It forms a bridge of $\LL_2$ (viewed as a subgraph
of $\LL_3$) until it reverses direction. 
Then it follows a leftwards bridge on a copy of $\LL_2$, before once again turning back towards the origin
(the shapes of the latter bridges are constrained by those of the earlier).
The number of such $n$-step polygons  is no larger than the number of $n$-step
bridges of $\LL_2$ (disregarding minor terms). Therefore, $\pi(\LL_3)\le \beta(\LL_2)$. 
However, $\beta(\LL_2) < \beta(\LL_3)=1.914\dots$,  (\cite[p.\ 198]{JA}).
This may be extended to wider ladders of the form $\LL_m:=\ZZ\times\{0,1,\dots,m\}$.

\subsection{Products of trees and lines}\label{ex:tz}\cite{GN,H19}
Let $G=\TT_3\times\ZZ$ be the (Cartesian) product of the $3$-regular tree $\TT_3$ and the doubly-infinite line $\ZZ$;
vertices of $G$ are expressed as vectors $(t,z)$, and two vertices $(t_1,z_1)$, $(t_2,z_2)$ are adjacent if and only if
either $t_1=t_2$ and $|z_1-z_2|=1$, or $t_1 \sim t_2$ in $\TT_2$ and $z_1=z_2$. 
With $(h,\sH)$ as in Section \ref{ex:tree}, we define the ghf $(h',\sH')$ on $G$ as follows.
For $\a\in\sH$, define $\a'$ by  $\a'(t,z):=(\a(t), z)$, and let $\rho$ be the shift given by $\rho(t,z)=(t,z+1)$.
The set of all such $\a'$, together with $\rho$, generates a (non-unimodular) subgroup $\sH'$ of $\Aut(G)$
that acts transitively. We let $h'(t,z) = h(t)$, thus obtaining the required ghf $(h',\sH')$ on $G$. It may be checked
that $(h',\sH',\rho)$ is a square ghf.

The graph $G$ has exponential growth and possesses a square ghf, whence $\beta(G)=\mu(G)$ by
Theorem \ref{thm:bridge}. By \cite[Thm 1.4]{H19} and the forthcoming Theorem \ref{thm:ball}, 
we have that $\pi(G)<\beta(G)$.
Such conclusions are valid  more generally for the graphs $\TT_k\times\ZZ^d$ (in the natural notation) with
$k\ge 3$ and $d\ge 1$.

One may try to prove $\pi<\beta$ using the Nachmias--Peres criterion $(\Delta-1)\rho<\mu$, where
$\De$ denotes vertex-degree and $\rho$ denotes spectral radius (see \cite[p.\ 6]{NP}).  This may be done
successfully for $k\ge 4$, $d=1$, using computed numerical inequalities for $\mu$. Hutchcroft \cite[p.\ 2804]{H19}
has shown that this criterion fails for $k=3$, $d=1$.

\subsection{Comments on main theorem}\label{ssec:35}
We make some comments on the proof of Theorem \ref{thm:main}. Proofs 
for the special case $\ZZ^d$ may be found in Hammersley \cite{Ham61}, Kesten \cite{KII}, Madras--Slade \cite{MS},
and Hughes \cite{H}.
The relevant properties of $\ZZ^d$ include reflection-invariance and translation-invariance, and these are used
in varying degrees in the four proofs. In addition, the proof of \cite{MS} seems to use the 
Euclidean geometry of $\RR^d$. 
Certain aspects of translation-invariance
are preserved in the generality of the current article via the assumption of the existence of a ghf.
The use of reflection-invariance is however more problematic in the case of more general
graphs. This is avoided here by an argument that is motivated by
Hammersley's original proof from 1961, but which avoids the gap in that proof at \cite[p.\ 520]{Ham61}
(see below). 

Whereas this paper is directed at the \emph{equality} of $\pi$ and $\mu$, 
earlier work of Madras--Wu \cite{MW} and Panagiotis
\cite{Pana} is devoted to \emph{strict inequality} (that is, $\pi<\mu$)
for regular tilings of the hyperbolic
plane. Such tilings have exponential growth. Panagiotis \cite{Pana} proved also 
that, for any transitive graph $G$ (and indeed more generally, see \cite[Rem.\ 4.1]{Pana}), 
such strict inequality implies that random SAW on $G$ is 
\emph{ballistic} (sometimes expressed as having \lq positive speed') in the sense that
there exists $c>0$ such that
\begin{equation}\label{eq:blaa}
\PP_n\bigl(d(\id,L_n)\le c n\bigr) \le e^{-cn}, \qq n \ge 1,
\end{equation}
where $\PP_n$ is the uniform probability measure on the set of $n$-step SAWs from the root $\id$ of $G$,
and $L_n$ is the final endvertex of the randomly selected SAW.
Hutchcroft \cite{H19} has proved ballisticity for any graph with a transitive, non-unimodular
(sub)group of automorphisms. The  strict inequality $\pi<\mu$ is believed to hold for all non-amenable, transitive graphs.

For clarity and reference, we present an explicit statement of the relationship 
between strict inequality and ballisticity (without claiming any originality).

\begin{theorem}\label{thm:ball}
Let $G\in\sG$ and let 
\begin{equation}\label{eq:reps}
\pi'(G)=\limsup_{n\to\oo}\Bigl(\sup_{v\in V} p_{n,v}\Bigr)^{1/n},
\end{equation}
where $p_{n,v}$ is the number of $n$-step polygons that include $v$.
Then $\pi'(G)<\mu(G)$ if and only if there exist $c>0$  and $N\ge 1$ such that
\begin{equation}\label{eq:blaa2}
\PP_{n,v}\bigl(d(v,L_n)\le c n\bigr) \le e^{-cn}, \qq n \ge N,\q v\in V,
\end{equation}
where $\PP_{n,v}$ is the uniform probability measure on the set of $n$-step SAWs starting at $v$,
and $L_n$ is the final endvertex of the randomly selected SAW.
\end{theorem}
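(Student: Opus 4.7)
The plan is to prove the two implications of the biconditional separately. The direction ``ballisticity $\Rightarrow\pi'(G)<\mu(G)$'' is short and follows from the combinatorial bound \eqref{eq:ineq1}, while the converse is the main content and re-traces the polygon-decomposition argument of Panagiotis \cite{Pana}.

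For the easy direction, I apply \eqref{eq:ineq1} with $v$ in place of $\id$ to obtain $2p_{n,v}\le c_{n-1,v}(\pd v)$, where $c_{n-1,v}(\pd v)$ denotes the number of $(n-1)$-step SAWs from $v$ terminating at a neighbour of $v$. For $n$ large enough that both $n-1\ge N$ and $c(n-1)\ge 1$, every neighbour $u$ of $v$ satisfies $d(v,u)=1\le c(n-1)$, so the hypothesis \eqref{eq:blaa2} yields $c_{n-1,v}(\pd v)\le e^{-c(n-1)}c_{n-1,v}$. Since $G$ is quasi-transitive, $(\sup_v c_{n-1,v})^{1/(n-1)}\to\mu(G)$ (see \cite{GL19}), and combining these estimates gives $\limsup_{n\to\infty}(\sup_v p_{n,v})^{1/n}\le \mu(G)e^{-c}$, whence $\pi'(G)\le\mu(G)e^{-c}<\mu(G)$.

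For the converse, suppose $\pi'(G)<\mu(G)$, and fix $\rho$ with $\pi'(G)<\rho<\mu(G)$ together with $A$ such that $\sup_{u\in V}p_{\ell,u}\le A\rho^{\ell}$ for all $\ell\ge 1$. The key combinatorial input is the following polygon construction. Given a SAW $\omega=(\omega_0,\dots,\omega_n)$ from $v$ to $w:=L_n(\omega)$ with $k:=d(v,w)\le cn$, fix a geodesic $\gamma=(w=u_0,u_1,\dots,u_k=v)$ from $w$ to $v$, let $i^*$ be the smallest positive index with $\omega_{i^*}\in\gamma$, and write $\omega_{i^*}=u_{j^*}$. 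Then
\[
P:=(\omega_0,\dots,\omega_{i^*},u_{j^*+1},\dots,u_k=v)
\]
is a polygon through $v$ of length $\ell=i^*+(k-j^*)\le n+k$, and the tail $\sigma:=(\omega_{i^*},\dots,\omega_n)$ is itself a SAW of length $n-i^*$ from $u_{j^*}$ whose endpoint satisfies $d(u_{j^*},w)=j^*\le k$. Since the map $\omega\mapsto(P,\omega_{i^*},\text{direction},\sigma)$ is injective, the quantity $G_n^k:=\sup_{v\in V}\#\{\omega:|\omega|=n,\,\omega_0=v,\,d(v,L_n(\omega))\le k\}$ satisfies a recursive estimate of the shape
\[
G_n^k\le 2\sum_{i=1}^n\sum_{j=0}^k p_{i+k-j}^{\max}\,G_{n-i}^{\,j},
\]
with $p_\ell^{\max}:=\sup_{u\in V}p_{\ell,u}$.

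The remaining task is to close this recursion to obtain $G_n^{cn}\le X^n$ for some $X<\mu(G)$ and some small $c>0$; combined with the uniform asymptotic $(\inf_v c_{n,v})^{1/n}\to\mu(G)$, such a bound yields \eqref{eq:blaa2}. I expect the main obstacle to lie in the combinatorial accounting when iterating the polygon construction on successive tails: the polygon lengths $\ell_s$ produced across the iteration telescope to $\sum_s\ell_s\le n+k$, but the multiplicity of such decompositions, together with the factor $A^r$, must be controlled against the exponential gain $(\mu(G)/\rho)^n$ afforded by $\pi'<\mu$. This accounting forms the substantive core of the argument in \cite{Pana}, to which I refer for the detailed analysis.
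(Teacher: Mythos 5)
Your proposal takes essentially the same route as the paper: the implication from ballisticity to $\pi'(G)<\mu(G)$ is proved exactly as there, by combining \eqref{eq:ineq1} (applied at a general root $v$) with \eqref{eq:blaa2} evaluated on endpoints at distance $1$ and quasi-transitivity, yielding $\pi'(G)\le \mu(G)e^{-c}$, while the converse implication is, as in the paper, ultimately delegated to Panagiotis \cite[Thm 4, Rem.\ 4.1]{Pana}. Your partial sketch of the polygon decomposition behind that converse is left open (you defer the recursion's closure to \cite{Pana}), but since the paper itself cites that result wholesale, this does not constitute a gap relative to the paper's own proof.
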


Since $G$ is assumed quasi-transitive, the inner supremum of \eqref{eq:reps} is over a finite set.
If the conditions of Theorem \ref{thm:main} are satisfied, then $\pi'(G)=\pi(G)$.

\begin{proof}
That $\pi'<\mu$ implies \eqref{eq:blaa2} for suitable $c$, $N$ is a consequence of 
\cite[Thm 4, Rem.\ 4.1]{Pana}\footnote{Panagiotis's proof is similar to (but independent of)
a proof of Hammersley in \cite[p.\ 520]{Ham61}. Furthermore, it corrects an error in the latter proof, 
where it is stated  (in the language of that paper) that $\Phi(n)$ 
is the union of the $\Phi_\psi(n)$.}.
Conversely, by \eqref{eq:ineq1} and \eqref{eq:blaa2}, 
\begin{align*}
p_{n+1,v } &= \tfrac12 c_{n,v} \PP_{n,v}\bigl(d(v,L_n)=1\bigr) \le \tfrac12 c_{n,v} e^{-c n},  
\qq n \ge N,\q v\in V,
\end{align*} 
where $c_{n,v}$ is the number of $n$-step SAWs from $v$.
On recalling Remark \ref{rem:ind}, we deduce that $\pi'(G) \le \mu(G)e^{-c}$ as required.
\end{proof}

By Theorems \ref{thm:main} and \ref{thm:ball}, random SAWs on the graphs $G$ of
Theorem \ref{thm:main} are not ballistic. That is of course weaker than
showing they are sub-ballistic in the sense of \cite{DCH,KP}.

\section{Proof of Theorem \ref{thm:main}}\label{sec:pf}

We begin with some further notation.
Let $(h,\sH,\rho)$ be a square ghf on $G=(V,E)\in\sG$.
A SAW from $u$ to $v$ is called \emph{stiff} if all of its vertices $x$, other than
its endvertices,  satisfy $h(u)<h(x)< h(v)$.
We shall define  (as in \cite{GrL4}) a certain  integer $r=r(h,\sH)$. If $\sH$ acts transitively, we set
$r=0$. Assume $\sH$ does not act transitively, and 
let $r=r(\sH)$ be the infimum of all $r$ such that the following holds.
Let $o_1,o_2,\dots,o_M$ be representatives of the (finitely many) orbits of $\sH$.
For $i\ne j$, there
exists a vertex $v_j \in \sH o_j$ 
together with a stiff SAW $\nu(o_i,v_j)$ from $o_i$
to $v_j$ with length $r$ or less. 
We choose such a SAW, and denote it $\nu(o_i,v_j)$ as above. We set $\nu(o_i,o_i)=\{o_i\}$.

It is proved in \cite[Prop.\ 3.2]{GrL4} that
\begin{equation}\label{eq:r}
0\le r \le (M-1)(2d+1)+2,
\end{equation}
where $M=|V/\sH|$ and 
\begin{equation}\label{eq:ddef}
d= \max\bigl\{|h(x)-h(y): x,y\in V,\, x\sim y\bigr\}.
\end{equation}
The constant $r$ will be used later in this section. 
By \eqref{eq:ddef},
\begin{equation}\label{eq:dh}
d(x,y) \ge \frac 1d |h(x)-h(y)|, \qq x,y\in V. 
\end{equation}

We state a lemma next.

\begin{lemma}\label{lem:new1}
Let $G\in\sG$ have square ghf $(h,\sH,\rho)$.
\begin{letlist}
\item We have that, as $k\to\infty$, $d(v,\rho^k(v)) \to \oo$ uniformly in $v\in V$.
\item
There exists an integer $\de$ such that
\begin{equation*}
d(v,\rho^k(v)) \le k \de, \qq v\in V,\q k \ge 1.
\end{equation*}
\end{letlist}
\end{lemma}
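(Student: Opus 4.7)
The plan is to handle (b) first, since it follows directly from the commuting structure of the square ghf and quasi-transitivity, and then to prove (a) by a compactness-and-pigeonhole contradiction that uses local finiteness together with the translation property of $\rho$.

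For (b), I would fix a finite set of $\sH$-orbit representatives $o_1,\dots,o_M$. Any $v\in V$ may be written as $v=\alpha(o_i)$ for some $\alpha\in\sH$ and some $i$. The key move is that $\rho$ commutes with $\alpha$, so
$$
\rho^k(v)=\rho^k\alpha(o_i)=\alpha\rho^k(o_i),
$$
and since $\alpha$ is a graph automorphism,
$$
d(v,\rho^k(v))=d(\alpha(o_i),\alpha\rho^k(o_i))=d(o_i,\rho^k(o_i))\le k\cdot d(o_i,\rho(o_i))
$$
by the triangle inequality applied along the iterates $o_i,\rho(o_i),\dots,\rho^k(o_i)$. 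Setting $\delta:=\max_{1\le i\le M} d(o_i,\rho(o_i))$ yields the uniform bound.

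For (a), I would argue by contradiction: assume there exist $R<\infty$ and sequences $v_n\in V$ with $k_n\to\infty$ and $d(v_n,\rho^{k_n}(v_n))\le R$. By the same orbit-reduction used in (b), write $v_n=\alpha_n(o_{i_n})$, which gives $d(o_{i_n},\rho^{k_n}(o_{i_n}))\le R$. Since $i_n\in\{1,\dots,M\}$, a subsequence has $i_n=i$ constant, so infinitely many iterates $\rho^{k_n}(o_i)$ lie in the ball $B(o_i,R)$. This ball is finite because $G$ is locally finite, so by the pigeonhole principle there exist $k_n<k_m$ with $\rho^{k_n}(o_i)=\rho^{k_m}(o_i)$; setting $j:=k_m-k_n\ge 1$ gives $\rho^j(o_i)=o_i$. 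Then
$$
F:=\{o_i,\rho(o_i),\dots,\rho^{j-1}(o_i)\}
$$
is a nonempty finite subset of $V$ with $\rho(F)=F$, contradicting the translation hypothesis that $\rho$ fixes no finite set of vertices.

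The main obstacle is identifying which piece of the square-ghf structure does the real work in (a). Quasi-transitivity of $\sH$ provides compactness; the commutation of $\rho$ with $\sH$ lets me transport the problem from arbitrary $v$ to a finite list of orbit representatives (and it is precisely this that delivers \emph{uniformity} in $v$ rather than merely pointwise escape); and the translation property, read setwise, is what rules out $\rho$ having a periodic orbit. The height-preservation axiom of Definition~\ref{def:ht2} is not used here; it will presumably enter later in the paper, when polygons are constructed from bridges via $\rho$.
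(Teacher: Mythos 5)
Your proof is correct and follows essentially the same route as the paper: commutation of $\rho$ with $\sH$ plus quasi-transitivity to reduce to finitely many orbit representatives (this is what gives uniformity), local finiteness plus pigeonhole to produce a finite $\rho$-invariant set contradicting the translation property, and a telescoping triangle inequality for (b). The only cosmetic difference is that you handle distinct orbits by taking a maximum over the representatives $o_1,\dots,o_M$, whereas the paper transfers between orbits via a stiff path of length at most $r$ (the constant of \eqref{eq:r}), so your version avoids invoking $r$ at this stage; otherwise the arguments coincide.
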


\begin{proof}
(a) Suppose there exists $v$ such that 
$d(v,\rho^k(v))\not\to\oo$. Since $G$ is locally finite, there exists $w\in V$ and a subsequence 
$(k_i)$ such that $\rho^{k_i}(v)=w$ for all $i$. Thus $\rho$ fixes the set of vertices $\{\rho^k(v): j_1\le k < j_2$\},
in contradiction of the assumption that $\rho$ is a translation. 

Next we prove uniformity of divergence. Firstly, we claim that, for given $k$, $d(v,\rho^k(v))$ is constant
on orbits of $\sH$. To see this, 
let $u\in \sH v$ and find $\alpha\in\sH$ such that $v=\alpha(u)$. Since $\rho\alpha=\alpha\rho$,
\begin{equation}\label{eq:new5}
d(v,\rho^k(v)) = d\bigl(\alpha(u), \rho^k(\alpha(u))\bigr)
=d\bigl(\alpha(u), \alpha(\rho^k(u))\bigr)
= d(u, \rho^k(u)).
\end{equation}

Secondly, suppose $u\notin\sH v$. 
By \eqref{eq:r},  there exists a path of length not
exceeding $r$ from $u$ to some $w\in \sH v$. 
By the triangle inequality, 
\begin{align}\label{eq:new6}
d(u,\rho^k(u)) &\le d(u,w) + d(w,\rho^k(w)) + d(\rho^k(w),\rho^k(u))\\
&\le 2r + d(w,\rho^k(w))
= 2r + d(v,\rho^k(v))\qq\text{by \eqref{eq:new5}}.
\nonumber
\end{align}
The claimed uniformity follows from \eqref{eq:new5}--\eqref{eq:new6}.

(b)
By \eqref{eq:new5}--\eqref{eq:new6} with $k=1$, $d(v,\rho(v))$ is bounded above by some $\de>0$, uniformly in $v$. 
By the triangle inequality,
\begin{equation*}
d(v,\rho^k(v)) \le \sum_{l=0}^{k-1} d(\rho^l(v), \rho^{l+1}(v)),
\end{equation*}
and the inequality for general $k$ follows.
\end{proof}

Let $G$ satisfy the conditions of the theorem with square ghf $(h,\sH, \rho)$.
We write $\pi:=\limsup_{n\to \oo}p_n^{1/n}$ as in \eqref{eq:defpi}. 
By \eqref{eq:ineq1} and Theorem \ref{thm:bridge},
\begin{equation}\label{eq:6}
\pi \le \mu=\beta.
\end{equation}

Since $G$ has sub-exponential growth, it is unimodular (see Remark \ref{rem:1}).
By Theorem \ref{thm:bridge}(a), the bridge constant $\beta=\beta(G)$ satisfies $\beta=\beta(h)$. 

\begin{figure}[t]
 \centerline{\includegraphics[width=0.4\textwidth]{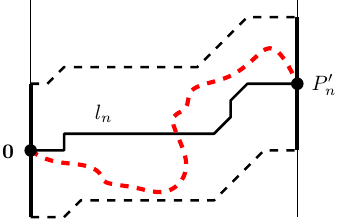}}
  \caption{The region $R_n$. A shortest path $l_n$ joins $\id$ to $P_n'$, and the red (dashed) path is a bridge.
  The two vertical lines depict the sets of $v$ such that $h(v)=0$, and such that $h(v)=h(P_n')$, \resp.}
  \label{fig:box}
\end{figure}

Let $n\ge 1$ and recall the number $b_n$ of $h$-bridges (henceforth called simply bridges). 
Find a vertex $P_n$ with $h(P_n)\ge 1$ such that the number of
bridges from $\id$ to $P_n$ is at least $b_n/|\Ga_n|$. Instead of working with the point $P_n$, we work with a point 
$P_n'$ defined as follows. By the discussion leading to \eqref{eq:r}, 
there exists a vertex $P_n' \in \sH\id$ such that there is a stiff SAW $\nu$ from $P_n$ to $P_n'$
with some length $\ell$ not exceeding $r$, We adjoin $\nu$ to any bridge from $\id$ to $P_n$ to obtain that
\begin{equation}\label{eq:exist}
\text{ there exist at least $b_n/|\Ga_n|$ bridges of length $n+\ell$ from $\id$ to $P_n'$.}
\end{equation}
We assume henceforth that $n\ge \ell$.

Let $l_n$ be a shortest bridge from $\id$ to $P_n'$, and let
$R_n$ be the subgraph of $G$ induced by the vertex-set
\begin{equation*}
D_n:= \{\id\}\cup\bigl\{v\in V: d(v,l_n)\le n,\, 1\le h(v) \le h(P_n')\bigr\}.
\end{equation*}
Note that the bridges of \eqref{eq:exist} lie within $R_n$.

The region $R_n$ is the basic ingredient of the following construction. Let $\ga\in\sH$ be such that
$\ga(\id)=P_n'$, so that $\ga$ maps $R_n$ to an image $\ga(R_n)$
with heights between $h(P_n')$ and $2h(P_n')$.
We may think of the $\ga^i(R_n)$, $i\in\ZZ$, as consecutive translates of $R_n$ that fit together to form
a \lq tube' along which their bridges combine to create a longer bridge.
See Figures \ref{fig:box} and \ref{fig:poly}.

Let $N\ge 1$ and let $S_N= \bigcup_{0\le i\le N-1} \ga^i(R_n)$ and $P_n^N=\ga^{N-1}(P_n')$. By \eqref{eq:exist}, 
\begin{equation}\label{eq:10}
\text{there  exist at least $(b_n/|\Ga_n|)^N$
$(n+\ell)N$-step bridges in $S_N$ from $\id$ to $P_n^N$.}   
\end{equation}

Let $l_{n,\oo}$
be the union $\bigcup_{-\oo < i < \oo}\ga^i(l_n)$, considered as a doubly-infinite path.
We now extend the $S_N$ into a doubly-infinite tube containing $l_{n,\oo}$, 
by defining $S_\oo$ to be the subgraph 
of $G$ induced by the vertex-set
\begin{equation*}
D_{n,\oo}=\bigl\{v\in V: d(v,l_{n,\oo})\le n\bigr\}.
\end{equation*}
The graph $S_\oo$ is periodic in that $\ga(S_\oo)=S_\oo$. Furthermore, $S_N$ is a subgraph of $S_\oo$.

\begin{figure}[t]
 \centerline{\includegraphics[width=0.8\textwidth]{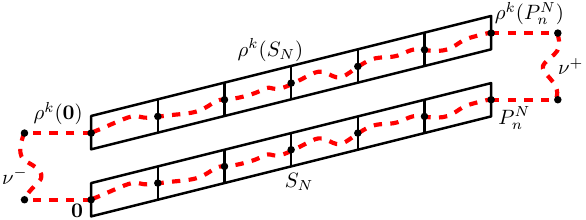}}
  \caption{The tube $S_N$ and its image $\rho^k(S_N)$.  Each
  is traversed by an $(n+\ell)N$-step  bridge, 
  and these two bridges are joined into a red (dashed) polygon by adding the connecting paths $\nu^\pm$.}
  \label{fig:poly}
\end{figure}

\begin{lemma}\label{prop:es}
There exists $k=k_n<\oo$ such that $S_\oo\cap\rho^k(S_\oo)  = \es$.
\end{lemma}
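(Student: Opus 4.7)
The plan is to argue by contradiction. If $w\in S_\oo\cap\rho^k(S_\oo)$, then by the definition of $S_\oo$ there exist $u,v\in l_{n,\oo}$ with $d(u,w)\le n$ and $d(\rho^k(v),w)\le n$, so $d(u,\rho^k(v))\le 2n$ by the triangle inequality. Write $u=\ga^i(x)$ and $v=\ga^j(y)$ with $x,y\in l_n$ and $i,j\in\ZZ$. The key commutativity $\rho\ga=\ga\rho$ from Definition \ref{def:ht2}(c) together with the fact that automorphisms preserve $d$ reduces the condition to
$$
d(\ga^m(x),\rho^k(y))\le 2n,\qq m:=i-j.
$$

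The next step is to bound $|m|$. Since $\ga\in\sH$ and $h$ is \hdi\ with $h(\ga(\id))=h(P_n')$, iteration gives $h(\ga^m(x))=h(x)+m\,h(P_n')$; on the other hand, $\rho$ is height-preserving by Definition \ref{def:ht2}(b), so $h(\rho^k(y))=h(y)\in[0,h(P_n')]$. Combining these with \eqref{eq:dh} yields
$$
|m\,h(P_n')+h(x)-h(y)|\le 2nd,
$$
and since $h(P_n')\ge 1$ we deduce $|m|\le (2nd+h(P_n'))/h(P_n')=:M_n$. Thus only finitely many values of $m$ are consistent with the hypothesised intersection, independently of $k$.

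Because $l_n$ is a finite path, the set $F:=\bigcup_{|m|\le M_n}\ga^m(l_n)$ is finite, so $C_n:=\max\{d(y,z):y,z\in F\}<\oo$. The triangle inequality now gives
$$
d(\ga^m(x),\rho^k(y))\ge d(y,\rho^k(y))-d(y,\ga^m(x))\ge d(y,\rho^k(y))-C_n.
$$
By Lemma \ref{lem:new1}(a), $d(y,\rho^k(y))\to\oo$ uniformly in $y\in V$, so I may choose $k$ large enough that $d(y,\rho^k(y))>2n+C_n$ for every $y\in l_n$, contradicting $d(\ga^m(x),\rho^k(y))\le 2n$. The main subtlety is the interplay of the two translation structures at play: $\ga$ shifts heights by $h(P_n')$ while $\rho$ preserves heights. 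The commutativity of $\rho$ and $\ga$ in Definition \ref{def:ht2}(c) is precisely what allows the displacement problem along the $\ga$-invariant path $l_{n,\oo}$ to be transferred into a uniform statement about the finite set $l_n$, at which point Lemma \ref{lem:new1}(a) closes the argument.
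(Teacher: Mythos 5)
Your proof is correct and follows essentially the same route as the paper: assume a point of intersection, use the height structure ($\ga$ shifts heights by $h(P_n')$ while $\rho$ preserves them, together with \eqref{eq:dh} and $\ga$-periodicity) to confine the relevant points to within bounded distance of a fixed finite set, and then invoke the uniform divergence of Lemma \ref{lem:new1}(a) to get a contradiction for large $k$. The only difference is bookkeeping: you bound the translate index $m$ of $\ga^m(l_n)$ directly, whereas the paper bounds the constant $K=\sup\{d(v,w):v,w\in S_\oo,\ h(v)=h(w)\}$ via a finite subpath of $l_{n,\oo}$.
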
 

\begin{proof}
Let $k \ge 1$, and suppose there exists $v\in S_\oo\cap \rho^k(S_\oo)$. 
Since $v\in \rho^k(S_\oo)$, we have $\rho^{-k}(v)\in S_\oo$,
so that $v,\rho^{-k}(v) \in S_\oo$. Thus there exist two vertices of the form $u, \rho^k(u)$ lying in $S_\oo$. 
We claim that, by Lemma \ref{lem:new1}(a),  this cannot hold for sufficiently large $k$.
To this end it suffices to show that 
\begin{equation*}
K:=\sup\{d(v,w): v,w\in S_\oo,\ h(v)=h(w)\}
\end{equation*}
satisfies
\begin{equation}\label{eq:Kfin}
K<\oo.
\end{equation}

We may express $l_{n,\oo}$ as the path $(\dots, z_{-1},z_0=\id, z_1,\dots)$ where 
$z_{-1}\in \ga^{-1}(R_n)$ and
$z_1\in R_n$. Since $h(z_k)\to \pm\oo$ as $k\to\pm\oo$, we have by \eqref{eq:dh} that
\begin{equation}\label{eq:ffin}
\text{for $v\in V$, $d(v, z_k) \to \oo$ as $k\to \pm\oo$.}
\end{equation}

By \eqref{eq:ffin} and the periodicity of $S_\oo$, there exists a finite subpath $\l'$ of $l_{n,\oo}$ such that
\begin{equation*}
K \le \sup\{d(v,w): v,w\in V,\, d(v, l')\le n,\ d(w,l')\le n\}.
\end{equation*} 
This is a supremum over a finite set, whence $K<\oo$, and the conclusion of the lemma follows 
by Lemma \ref{lem:new1}(a) for sufficiently large $k$.
\end{proof}

By Lemma \ref{prop:es}, we may choose $k=k_n<\oo$ such that $S_\oo\cap\rho^k(S_\oo)   = \es$.
Since $S_N \subseteq S_\oo$, 
\begin{equation}\label{eq:nooverlap}
S_N\cap \rho^k(S_N)=\es, \qq N\ge 1.
\end{equation}
By \eqref{eq:10}, there exist at least $(b_n/|\Ga_n|)^N$ distinct bridges traversing each of $S_N$ and $\rho^k(S_N)$, 
and we propose to join such bridges into  polygons by adding connections between their endvertices, as 
illustrated in Figure \ref{fig:poly}.

Consider first a connection between $\id$ and $\rho^k(\id)$ using vertices with negative height
(apart from its endvertices).
We construct three paths as follows.
\begin{romlist}
\item 
By Definition \ref{def:height}(c), we may find a path $(\id, c_1,c_2,\dots)$ such that
$h(c_i)\le -i$ for all $i$. By \eqref{eq:dh}, we may choose $t=t_n$ such 
that $d(H_0,c_{t})> k\de$ where $H_0=\{v\in V: h(v)=0\}$ and $\de$ is given in Lemma \ref{lem:new1}(b).
We denote by $\nu_1$ the path $(\id, c_1,c_2,\dots,c_t)$.
\item
By Lemma \ref{lem:new1}(b), there exists a shortest path $\nu_2$ from $c_t$
 to $\rho^k(c_t)$ with length not exceeding $k\de$.
\item
As above, the path $\nu_3:=\rho^k(\nu_1)$, reversed, joins $\rho^k(c_t)$ to $\rho^k(\id)$.
\end{romlist}
The union of the $\nu_i$ contains a path $\nu^-$
from $\id$ to $\rho^k(\id)$ whose vertices, apart from the two endvertices, have strictly negative heights.
The length of $\nu^-$ does not exceed $2t_n+ k\de$. 

By a similar construction we find a path $\zeta$ from $P_n^1$ to $\rho^k(P_n^1)$ using only vertices with 
heights strictly exceeding $h(P_n^1)$, except for its endvertices, and we set $\nu^+=\ga^{N-1}(\zeta)$.
Thus $\nu^+$ joins $P_n^N$ to $\ga^{N-1}(\rho(P_n^1)) = \rho(P_n^N)$, using only vertices
(apart from the endvertices) with heights strictly exceeding $h(P_n^N)$. 
Let $l_n^\pm$ be the lengths of the above $\nu^\pm$, so that
\begin{equation}\label{eq:llength}
l_n^\pm \le 2t_n+k\de.
\end{equation}

The above paths and bridges may be pieced together to form polygons. First we follow a bridge traversing
$S_N$ from $\id$ to
$P_n^N$, followed by $\nu_n^+$, followed by a bridge of $\rho^k(S_N)$ (reversed)
from $\rho^k(\id)$ to $\rho^k(P_n^N)$, and finally the path $\nu^-$ (reversed). 
It follows that
\begin{equation}\label{eq:5}
p_{2(n+\ell)N+l_n^-+l_n^+} \ge \left( \frac{b_n}{|\Ga_n|}\right)^{2N}.
\end{equation}
Take the $(2Nn)$th root to find that
\begin{equation}\label{eq:7}
 p_{2(n+\ell)N+l_n^-+l_n^+}^{1/(2Nn)} \ge \left(\frac{b_n}{|\Ga_n|}\right)^{1/n}.
\end{equation}
Now, by \eqref{eq:llength},
\begin{equation}\label{eq:8}
\begin{aligned}
\frac {2(n+\ell)N+l_n^-+l_n^+}{2nN} &\to 1+ \frac \ell n &&\text{as $N\to\oo$}\\
 &\to 1 &&\text{as $n\to\oo$},
 \end{aligned}
 \end{equation}
 whence $\pi \ge \beta$ by \eqref{eq:beta} and \eqref{eq:sube}.
This may be combined with \eqref{eq:6} to obtain part (a) of the theorem.

We turn to the more specific part (b). Let $\eps>0$ and choose $\eps',\eta>0$ such that
$(\beta-\eps')^{1-\eta} > \beta-\eps$. Pick $n$ sufficiently large that 
\begin{equation*}
\frac\ell n <\eta, \qq \left(\frac{b_n}{|\Ga_n|}\right)^{1/n}> \beta-\eps',
\end{equation*}
and consider the arithmetic sequence
$(m_N: N=1,2,\dots)$ where $m_N=2(n+\ell)N + l_n^-+l_n^+$. By \eqref{eq:7}--\eqref{eq:8},
\begin{equation*}
\liminf_{N\to\oo} p_{m_N}^{1/m_N} \ge (\beta-\eps')^{ 1-\eta} > \beta-\eps,
\end{equation*}
as required.

\section*{Acknowledgement}

The author thanks Tony Guttmann, Stuart Whittington, and Barry Hughes for their recollections of Hammersley's
second proof that $\pi(\ZZ^d)=\mu(\ZZ^d)$ when $d \ge 2$. That proof was published in Hughes \cite[p.\ 442]{H}.
Tom Hutchcroft has kindly commented on the inequality $\pi<\mu$ for graphs with a transitive,
non-unimodular subgroup
of automorphisms. 

\providecommand{\bysame}{\leavevmode\hbox to3em{\hrulefill}\thinspace}
\providecommand{\MR}{\relax\ifhmode\unskip\space\fi MR }
% \MRhref is called by the amsart/book/proc definition of \MR.
\providecommand{\MRhref}[2]{%
  \href{http://www.ams.org/mathscinet-getitem?mr=#1}{#2}
}
\providecommand{\href}[2]{#2}

%\bibliography{poly}
%\bibliographystyle{amsplain}
\end{document}